\documentclass[11pt]{amsart}


\usepackage[paper=letterpaper,margin=1in,twoside=false,includehead,headheight=18pt]{geometry}

\usepackage[utf8]{inputenc}

\usepackage{amsmath,amssymb,amsthm} 
\usepackage{enumerate,paralist}
\usepackage{graphicx}
\usepackage{mathtools}
\usepackage{todonotes}

\newtheorem{theorem}{Theorem}[section]
\newtheorem{lemma}[theorem]{Lemma} 
 
\newtheorem{corollary}[theorem]{Corollary}

{\theoremstyle{definition} 
\newtheorem{claim}{Claim} 
 
\newtheorem*{definition}{Definition} 
\newtheorem{conjecture}{Conjecture} 
 
 }

\newtheoremstyle{named}%
  {}{}						
  {\upshape}				
  {0pt}{\bfseries}			
  {.}						
  {.5em}					
  {\thmname{#1}\thmnote{ #3}}  

\theoremstyle{named}

\newcommand{\set}[1]{\left\{{#1}\right\}} 
\newcommand{\setof}[2]{\left\{{#1}\,:\,{#2}\right\}}
\newcommand{\of}{\subseteq}

\newcommand{\paste}{\wedge}
\newcommand{\brac}[1]{\left\langle{#1}\right\rangle} 

\DeclarePairedDelimiter{\abs}{\lvert}{\rvert}




\begin{document}

\title{A note on the values of independence polynomials at $-1$}
\date{\today}
\author{Jonathan Cutler} \address{Department of Mathematical Sciences\\
Montclair State University\\
Montclair, NJ} \email{jonathan.cutler@montclair.edu} 
\author{Nathan Kahl} \address{Department of Mathematics and Computer Science\\
Seton Hall University\\
South Orange, NJ} \email{nathan.kahl@shu.edu}
\maketitle

\begin{abstract}
	The \emph{independence polynomial $I(G;x)$} of a graph $G$ is $I(G;x)=\sum_{k=1}^{\alpha(G)} s_k x^k$, where $s_k$ is the number of independent sets in $G$ of size $k$.  The \emph{decycling number} of a graph $G$, denoted $\phi(G)$, is the minimum size of a set $S\of V(G)$ such that $G-S$ is acyclic.  Engstr\"om proved that the independence polynomial satisfies $\abs{I(G;-1)} \leq 2^{\phi(G)}$ for any graph $G$, and this bound is best possible.  Levit and Mandrescu provided an elementary proof of the bound, and in addition conjectured that for every positive integer $k$ and integer $q$ with $\abs{q}\leq 2^k$, there is a connected graph $G$ with $\phi(G)=k$ and $I(G;-1)=q$.  In this note, we prove this conjecture.
\end{abstract}

\section{Introduction} 
\label{sec:introduction}

Let $\alpha(G)$ denote the \emph{independence number of a graph $G$}, the maximum order of an independent set of vertices in $G$.  The \emph{independence polynomial of a graph $G$} is given by
\[
	I(G;x)=\sum_{k=1}^{\alpha(G)} s_k x^k,
\]
where $s_k$ is the number of independent sets of size $k$ in $G$.  The independence polynomial has been the object of much research (see for instance the survey \cite{LM05}).  One direction of this research, partly motivated by connections with hard-particle models in physics \cite{A13,BLN08,E09,HS10,J09}, has focused on the evaluation of the independence polynomial at $x=-1$.
 
The \emph{decycling number of a graph $G$}, denoted $\phi(G)$, is the minimum size of a set of vertices $S\of V(G)$ such that $G-S$ is acyclic.  Engstr\"om \cite{E09} proved the following bound on $I(G;-1)$, which is best possible.

\begin{theorem}[Engstr\"om]\label{thm:bound}
	For any graph $G$, $\abs{I(G;-1)}\leq 2^{\phi(G)}$.
\end{theorem}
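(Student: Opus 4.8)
The plan is to induct on $\phi(G)$, using the fundamental vertex recurrence for the independence polynomial, with forests (the case $\phi(G)=0$) as the base case. Throughout I adopt the standard convention that the empty set is counted, i.e.\ I add the constant term $s_0=1$; this is what makes the recurrence clean, and it is needed for the bound to hold (e.g.\ $K_2$ is a forest with $I(K_2;-1)=-1$).

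First I would record the recurrence. Fixing any $v\in V(G)$ and splitting independent sets according to whether they contain $v$ gives
\[
  I(G;x)=I(G-v;x)+x\,I(G-N[v];x),
\]
where $N[v]$ is the closed neighborhood of $v$. Evaluating at $x=-1$ yields $I(G;-1)=I(G-v;-1)-I(G-N[v];-1)$. I would also note that $\phi$ is monotone under passing to induced subgraphs: if $S$ decycles $G$ and $H$ is an induced subgraph, then $H-(S\cap V(H))$ is an induced subgraph of the acyclic graph $G-S$ and hence acyclic, so $\phi(H)\le\phi(G)$.

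For the inductive step, suppose $\phi(G)=k\ge 1$ and choose $v$ in a minimum decycling set $S$. Then $S\wo\set{v}$ decycles $G-v$, so $\phi(G-v)\le k-1$, and by monotonicity $\phi(G-N[v])\le\phi(G-v)\le k-1$. Applying the inductive hypothesis to both graphs and the triangle inequality to the recurrence at $-1$ gives $\abs{I(G;-1)}\le 2^{k-1}+2^{k-1}=2^{k}$, as required.

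The delicate part is the base case, where the naive recurrence only delivers the bound $2$ rather than $1$. I would argue by induction on $\abs{V(F)}$ for forests $F$. If $F$ is edgeless then $I(F;-1)\in\set{0,1}$ directly. Otherwise $F$ has a leaf $v$ with neighbor $u$. The key observation is that $v$ is isolated in $F-u$, and an isolated vertex contributes a factor $(1+x)$ to the independence polynomial, so $I(F-u;-1)=0$. Hence the recurrence applied at $u$ collapses to $I(F;-1)=-I(F-N[u];-1)$, and since $F-N[u]$ is a forest on fewer vertices the inductive hypothesis gives $\abs{I(F;-1)}=\abs{I(F-N[u];-1)}\le 1$. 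This trick of killing one branch of the recurrence is what I expect to be the crux; once it is in place, the two inductions fit together cleanly.
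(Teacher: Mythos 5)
Your proof is correct, but there is no proof in the paper to compare it against: Theorem~\ref{thm:bound} is stated as Engstr\"om's result, cited from \cite{E09}, and the paper merely remarks that Levit and Mandrescu \cite{LM11} later supplied an elementary proof; the note's own contribution begins after this statement. Your argument is exactly the kind of elementary route that citation alludes to. The skeleton is sound: the Gutman--Harary recurrence evaluated at $-1$, with $v$ chosen from a minimum decycling set, gives $\phi(G-v)\le k-1$ and, by your (correctly justified) monotonicity of $\phi$ under induced subgraphs, $\phi(G-N[v])\le k-1$ as well, so the triangle inequality closes the induction on $\phi(G)$. The real content is in the base case, and your leaf trick is valid: if $v$ is a leaf of the forest $F$ with neighbor $u$, then $v$ is isolated in $F-u$, the factor $(1+x)$ kills $I(F-u;-1)$, and the recurrence at $u$ collapses to $I(F;-1)=-I(F-N[u];-1)$, giving $\abs{I(F;-1)}\le 1$ by induction on $\abs{V(F)}$.

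One point you raise deserves emphasis because it is not pedantry but a genuine repair of the paper's setup. As written, the definition $I(G;x)=\sum_{k=1}^{\alpha(G)}s_kx^k$ omits the empty independent set, and under that convention both Lemma~\ref{lem:recur} and the theorem itself fail (e.g.\ $K_2$ would give $\abs{I(K_2;-1)}=2>2^{\phi(K_2)}=1$). The paper tacitly uses the convention $s_0=1$ throughout --- its own table lists $I(K_1;-1)=0$ and $I(K_2;-1)=-1$, which are the values with the constant term included --- and your proof makes that assumption explicit, as it must.
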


Levit and Mandrescu \cite{LM11} gave an elementary proof of Theorem~\ref{thm:bound} and, in addition, proposed the following conjecture.

\begin{conjecture}[Levit and Mandrescu]\label{conj:main}
	Given a positive integer $k$ and an integer $q$ with $\abs{q}\leq 2^k$, there is a connected graph $G$ with $\phi(G)=k$ and $I(G;-1)=q$.
\end{conjecture}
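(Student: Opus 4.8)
The plan is to build the required graphs by induction on $k$, attaching one cycle at a time through a bridge and tracking how $I(\,\cdot\,;-1)$ evolves. Two elementary identities drive everything. First, the deletion recursion $I(G;x)=I(G-v;x)+x\,I(G-N[v];x)$ (splitting independent sets according to whether they contain $v$, with $N[v]$ the closed neighborhood) gives, at $x=-1$, the relation $I(G;-1)=I(G-v;-1)-I(G-N[v];-1)$. Second, if $G$ is obtained from disjoint connected graphs $G_1,G_2$ by adding a single bridge $u_1u_2$, then subtracting the independent sets that contain both endpoints yields $I(G;-1)=I(G_1;-1)\,I(G_2;-1)-I(G_1-N[u_1];-1)\,I(G_2-N[u_2];-1)$; since a bridge lies on no cycle, $\phi(G)=\phi(G_1)+\phi(G_2)$. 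I would also record the standard fact that $I(P_n;-1)$ is $6$-periodic, cycling through $1,0,-1,-1,0,1$, whence $I(C_n;-1)=I(P_{n-1};-1)-I(P_{n-3};-1)\in\{-2,-1,1,2\}$.

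Because the bridge formula forces us to know not just $I(H;-1)$ but also $I(H-N[u];-1)$ at the vertex $u$ where the next cycle is attached, I would carry through the induction a rooted graph $(H,u)$ together with the pair $(p,r):=(I(H;-1),\,I(H-N[u];-1))$. Attaching a cycle $C$ at $u$ (bridged to a vertex $c$ of $C$) and choosing a new root sends $(p,r)$ to a new pair by an integer linear map determined by $C$ and the root positions; for instance attaching a $C_6$ with the new root opposite the bridge realizes $(p,r)\mapsto(2p+r,\,-p)$, while $C_3$ realizes $(p,r)\mapsto(-2p-r,\,p)$, and a $C_4$ with a pendant vertex can be used to collapse the value to $0$. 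Each such attachment raises $\phi$ by exactly $1$ and keeps the graph connected, the $k$ cycles being vertex-disjoint.

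For the base case $k=1$ I would check, by hand, that for every $q$ with $\abs{q}\le 2$ and every $\epsilon\in\{-1,0,1\}$ there is a connected rooted graph with $\phi=1$ and state $(q,\epsilon)$: the cycles $C_3,\dots,C_6$ supply the values $-2,-1,1,2$, the graph $K_4-e$ (rooted at a degree-$2$ vertex) gives value $-2$ with correction $0$, a $C_4$ with a pendant vertex gives value $0$, and attaching a short pendant path and choosing the root tunes the correction through $\{-1,0,1\}$ via the period-$6$ behaviour. For the inductive step, given a target $Q$ with $\abs{Q}\le 2^{k+1}$ I would write $Q=2q+\delta$ with $\delta\in\{-1,0,1\}$ and $\abs{q}\le 2^k$, realize a level-$k$ graph with value $q$ and correction $\delta$, and attach a $C_6$ to obtain value $2q+\delta=Q$ at level $k+1$; this doubling-with-offset move, iterated, reaches every integer in $[-2^{k+1},2^{k+1}]$ by the usual binary-expansion argument.

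The main obstacle is that the attachment couples the two coordinates: the $C_6$ move that sets the value to $2p+r$ simultaneously forces the new correction to be $-p$, so one cannot prescribe the new value and the new correction independently with a single cycle. Thus the naive induction hypothesis ``every value is attainable with every correction in $\{-1,0,1\}$'' is not preserved. I would resolve this by promoting the induction to a genuinely two-dimensional reachability statement, identifying the exact region of attainable pairs $(p,r)$ at level $k$ and showing it is closed under the available transfer maps. The flexibility needed comes from enlarging the repertoire of moves: instead of a bare cycle I would attach a cycle together with a pendant path (still a single bridge, hence still $\phi+1$), whose length and root position give, through the period-$6$ identity, a whole family of integer transfer maps. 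Verifying that this family carries the level-$k$ region onto the level-$(k+1)$ region---equivalently, that value and correction can be steered jointly to any admissible target---is the technical heart of the argument.
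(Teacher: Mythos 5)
Your framework is essentially the paper's own: the pair $(I(H;-1),\,I(H-N[u];-1))$ you carry through the induction is the paper's ``bracket'' $\brac{a-b,a,b}$, your bridge identity is its Pasting Lemma, and the period-$6$ path behaviour is its Extension Lemma. The problem is that your argument stops exactly where the real difficulty begins. You correctly observe that attaching a cycle couples the two coordinates---the $C_6$ move that produces value $2p+r$ simultaneously forces the new correction, so the hypothesis ``every admissible value is attainable with every correction in $\{-1,0,1\}$'' is not preserved---and your doubling-with-offset step $Q=2q+\delta$ needs precisely that unavailable freedom. Your proposed repair (determine the exact two-dimensional region of attainable pairs and verify that cycle-plus-pendant-path attachments map the level-$k$ region onto the level-$(k+1)$ region) is announced but never carried out; you yourself call it ``the technical heart,'' and without it there is no proof. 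This is a genuine gap, not a routine verification: which corrections can accompany which values is exactly the delicate question, and it is not clear that any region closed under your transfer maps contains all pairs your induction step needs. (A smaller slip: with the new root opposite the bridge on $C_6$ the map is $(p,r)\mapsto(2p+r,\,-p-r)$; to get $(2p+r,\,-p)$ the root should sit at distance two from the attachment vertex.)

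The paper circumvents the coupling rather than fighting it, and this is the idea your proposal is missing. First it disposes of most cases cheaply: $q=0$ via a $K_1$ component (then made connected), even $q$ by doubling (pasting $C_6$, which raises $\phi$ by one and multiplies the value by $2$), and negative $q$ by negation (pasting $K_2$), so only odd $q\in[0,2^k]$ remain. For those it proves a strengthened claim in which the correction is not a free parameter but is forced to carry the complementary value: every odd $q\in[0,2^k]$ is realized by a connected graph with bracket $\brac{q,2^k,2^k-q}$ or $\brac{q,-2^k+q,-2^k}$. An odd $q$ in the upper half is written as $q=2^k-r$ with $r$ odd in $[0,2^{k-1}]$; pasting the level-$(k-1)$ graph for $2^{k-1}-r$ (whose bracket, by induction, contains $r$ or $-2^{k-1}$ in the right slot) with $C_6^1$ or $C_6^2$ produces the required bracket $\brac{2^k-r,2^k,r}$ or $\brac{2^k-r,-r,-2^k}$. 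The odd values $r$ in the lower half then need no separate construction: they already sit inside the brackets just built, and path extensions rotate a bracket entry into the value position (extending by four steps turns $\brac{2^k-r,2^k,r}$ into $\brac{r,-2^k+r,-2^k}$, again of the required form). In short, the induction hypothesis is chosen so that the correction forced on you at level $k$ is exactly the number you will need at level $k+1$; prescribing value and correction independently, which your plan requires, is never needed and is avoided on purpose.
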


For brevity, in this paper a graph $G$ with $\phi(G)=k$ and $I(G;-1)=q$, with $|q|\le 2^k$, will be referred to as a \emph{$(k,q)$-graph}.  In \cite{LM13}, Levit and Mandrescu provided constructions that gave $(k,q)$-graphs for all $k\le 3$ and $\abs{q}\leq 2^k$.  Also, they gave constructions for every $k$ provided $q\in \set{2^{\phi(G)},2^{\phi(G)}-1}$.  In this paper, we prove Conjecture \ref{conj:main}. 


\section{The Construction and Proof of Conjecture} 
\label{sec:tools}

The construction proceeds inductively, using particular $(k-1,q)$-graphs to produce the necessary $(k,q)$-graphs.  First we assemble the tools used in the construction.  The most important tool is a recursive formula for $I(G;x)$ due to Gutman and Harary \cite{GH}.  We let $N(v)=\setof{x\in V(G)}{xv\in E(G)}$ and $N[v]=\set{v}\cup N(v)$.  

\begin{lemma}\label{lem:recur}
	For any graph $G$ and any vertex $v\in V(G)$, 
	\[
		I(G;x)=I(G-v;x)+xI(G-N[v];x).
	\]
\end{lemma}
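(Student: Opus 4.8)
The plan is to prove the Gutman–Harary recursion by partitioning the independent sets of $G$ according to whether or not they contain the fixed vertex $v$. This is the natural combinatorial dichotomy, and it lines up exactly with the two terms on the right-hand side, so I expect the proof to be short and bijective rather than computational.

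First I would observe that every independent set $A$ of $G$ falls into exactly one of two classes: those with $v\notin A$ and those with $v\in A$. Summing the monomials $x^{\abs{A}}$ over all independent sets and splitting along this dichotomy gives $I(G;x)$ as a sum of two generating functions, one over each class. The whole content of the lemma is identifying each of these two pieces with the claimed term.

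For the first class, an independent set $A$ with $v\notin A$ is precisely an independent set of the vertex-deleted graph $G-v$: deleting $v$ removes no edges among the remaining vertices, so a subset of $V(G)\wo\set{v}$ is independent in $G$ if and only if it is independent in $G-v$. Hence $\sum_{v\notin A} x^{\abs{A}} = I(G-v;x)$. For the second class, an independent set $A$ containing $v$ must avoid every neighbor of $v$, so $A\wo\set{v}$ is an independent set of $G-N[v]$; conversely any independent set $B$ of $G-N[v]$ yields the independent set $B\cup\set{v}$ of $G$ containing $v$, and this correspondence is a bijection that increases the size by exactly one. Tracking the exponent, $\sum_{v\in A} x^{\abs{A}} = \sum_{B} x^{\abs{B}+1} = x\,I(G-N[v];x)$, which supplies the factor of $x$ and the second term.

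I do not anticipate a genuine obstacle here, since the argument is a routine application of the deletion/contraction-style bookkeeping familiar from the transfer-matrix or inclusion–exclusion treatment of graph polynomials. The only point deserving care is the bijection for sets containing $v$: one must check that removing $v$ lands in $G-N[v]$ (not merely in $G-v$), which uses independence to guarantee that $A$ meets $N(v)$ in the empty set, and that the constant term conventions ($s_0=1$ for the empty set, so that $I(G-N[v];x)$ has the right constant term) are handled consistently so the lone monomial $x^1$ coming from $A=\set{v}$ is accounted for. With those conventions fixed, matching exponents termwise completes the proof.
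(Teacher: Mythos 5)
Your proof is correct, and there is nothing in the paper to compare it against: the paper states this lemma as a cited result of Gutman and Harary and gives no proof, and the partition-by-containment-of-$v$ argument you give is the standard proof of that cited result. The one point you flag in passing is in fact the only delicate issue, and it is worth stressing: the identity requires the convention $s_0=1$ (the empty set counted, so $I(G;x)$ has constant term $1$). Under the paper's \emph{literal} definition $I(G;x)=\sum_{k=1}^{\alpha(G)}s_kx^k$, which omits the empty set, the lemma as stated is false --- already for $G=K_1$ one gets $I(K_1;x)=x$ while both graphs on the right-hand side are empty, giving $0$; in general the discrepancy is an extra additive $x$, coming from the set $\set{v}$, whose image under your bijection is the empty independent set of $G-N[v]$. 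Your convention is the one the paper actually uses in its computations (e.g., $I(K_1;-1)=0$ and $I(K_2;-1)=-1$ in its table of small examples both require the constant term), so your proof is the right one; just note explicitly that the displayed definition in the paper's introduction should be read as including $s_0x^0=1$.
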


Using this, or simply counting independent sets, we can derive the independence polynomial at $-1$ for small graphs.  Some useful examples can be found in Table~\ref{tab:exs}.

\begin{table}[h]
\begin{center}
\begin{tabular}{c|c}
	$G$ & $I(G;-1)$\\ \hline \hline
	$K_1$ & $0$\\
	$K_2$ & $-1$\\
	$K_3=C_3$ & $-2$\\
	$C_6$ & $2$
\end{tabular}
\end{center}
\caption{Some small examples}
\end{table}

Since Lemma \ref{lem:recur} requires a particular vertex $v \in V(G)$ to be specified, it will often be helpful to root graphs for which we want to compute the independence polynomial at $-1$.  Given a graph $G$ and a vertex $v\in V(G)$, the \emph{rooted graph $G_v$} is the graph $G$ with the vertex $v$ labeled.  Of course, $I(G;-1)=I(G_v;-1)$ for any vertex $v\in V(G)$.

We now introduce two operations on rooted graphs which will be useful in our proof.  The first of these is called \emph{pasting}.

\begin{definition}
	Given two rooted graphs $G_v$ and $H_w$, the \emph{pasting of $G_v$ and $H_w$}, denoted $G_v\paste H_w$, is the rooted graph formed by identifying the roots $v$ and $w$.
\end{definition}

We note two important facts.  First, the pasting operation creates no new cycles, and thus $\phi(G_v \paste H_w) \le \phi(G_v)+\phi(H_w)$.   (In our construction the roots will be pendant vertices, and so $\phi(G_v \paste H_w) = \phi(G_v)+\phi(H_w)$.)  Second, if for two rooted graphs $G_v$ and $H_w$ the quantities $I(G_v;-1)$ and $I(H_w;-1)$ have been evaluated using Lemma \ref{lem:recur}, then the value of $I(G_v\paste H_w;-1)$ can be determined in a straightforward way.  It is well-known that, letting $G\cup H$ denote the disjoint union of $G$ and $H$, we have
\[
	I(G\cup H;x)=I(G;x)I(H;x).
\]
Deleting the pasted vertex in $G_v \paste H_w$ produces a disjoint union of graphs.  This fact, and the recurrences 
\begin{align*}
	I(G_v;-1)&=I(G_v-v;-1)-I(G_v-N[v];-1)\\
	I(H_w;-1)&=I(H_w-w;-1)-I(H_w-N[w];-1) 
\end{align*}
then give
\[
I(G_v\paste H_w;-1) = I(G_v-v;-1)I(H_w-w;-1)-I(G_v-N[v];-1)I(H_w-N[w];-1).
\]

It will be helpful to keep track of the various parts of the above calculation, and in order to do so we introduce the following bookkeeping device.  Given a rooted graph $G_v$, where $I(G_v-v;-1)=a$ and $I(G_v-N[v];-1)=b$, and hence $I(G_v;-1)=a-b$, we write $I(G_v;-1)=\brac{a-b,a,b}$ and say that $G_v$ has \emph{bracket} $\brac{a-b,a,b}$.  An example can be found in Figure~\ref{fig:one}.
\begin{figure}[ht]
	\includegraphics{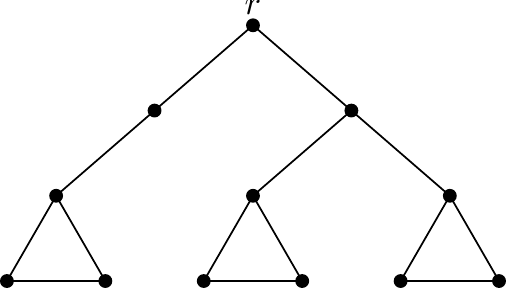}
	\caption{A graph rooted at $r$ with bracket $\brac{5,-3,-8}$.}\label{fig:one}
\end{figure}
Note that for a given rooted graph $G_v$ there are unique integers $a$ and $b$, determined by the root, with $I(G_v;-1)=\brac{a-b,a,b}$.  Using this notation, the calculations above give the following lemma.

\begin{lemma}[Pasting Lemma]\label{lem:paste}
	If $G_v$ and $H_w$ are rooted graphs on at least two vertices with $I(G_v;-1)=\brac{a-b,a,b}$ and $I(H_w;-1)=\brac{c-d,c,d}$, then 
\[	
	I(G_v\paste H_w;-1)=ac-bd = \brac{ac-bd,ac,bd}
\]
and $G_v \paste H_w$ has bracket $\brac{ac-bd,ac,bd}$.
\end{lemma}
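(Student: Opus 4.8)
The plan is to reduce the entire statement to a single application of the Gutman--Harary recurrence (Lemma~\ref{lem:recur}) at the merged root, combined with the multiplicativity of $I(\cdot\,;x)$ over disjoint unions. Write $u$ for the vertex of $G_v\paste H_w$ obtained by identifying $v$ and $w$; by definition of pasting, $u$ is the root of the pasted graph, so the bracket we must compute is $\brac{I(G_v\paste H_w;-1),\,I((G_v\paste H_w)-u;-1),\,I((G_v\paste H_w)-N[u];-1)}$. The whole argument therefore amounts to identifying what the two graphs $(G_v\paste H_w)-u$ and $(G_v\paste H_w)-N[u]$ look like.

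First I would establish the structural observation that deleting $u$ separates the two pasted copies. Since $u$ is the unique vertex shared by the copies of $G$ and $H$ inside $G_v\paste H_w$, and every edge of the pasted graph lies entirely within one copy, removing $u$ leaves no edge joining the $G$-part to the $H$-part. Hence $(G_v\paste H_w)-u=(G-v)\cup(H-w)$ as a disjoint union, and applying the product formula $I(G\cup H;x)=I(G;x)I(H;x)$ at $x=-1$ gives $I((G_v\paste H_w)-u;-1)=ac$.

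Next I would run the same argument for the closed neighborhood. The neighbors of $u$ are exactly the former neighbors of $v$ in $G$ together with those of $w$ in $H$, so $N[u]=\set{u}\cup N(v)\cup N(w)$, where the two neighborhoods are taken in $G$ and $H$ respectively. Deleting this set removes $N[v]$ from the $G$-copy and $N[w]$ from the $H$-copy, and again no cross edges survive, yielding $(G_v\paste H_w)-N[u]=(G-N[v])\cup(H-N[w])$; multiplicativity then gives $I((G_v\paste H_w)-N[u];-1)=bd$. Feeding these two values into Lemma~\ref{lem:recur} at $u$ produces $I(G_v\paste H_w;-1)=ac+(-1)\,bd=ac-bd$, and reading off the three quantities with respect to the root $u$ shows the bracket of $G_v\paste H_w$ is precisely $\brac{ac-bd,ac,bd}$.

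The computation is short, so the only point genuinely requiring care is the disjoint-union decomposition: one must verify that deleting $u$ (respectively $N[u]$) leaves no edge between the two copies, which is exactly where the fact that pasting identifies along a \emph{single} vertex is used. I expect this to be the main (and only) obstacle, and I would spell it out explicitly rather than assert it. The hypothesis that each of $G_v$ and $H_w$ has at least two vertices is a mild technical condition ensuring the brackets are sensibly defined in the stated form; I would check that the decomposition and the product formula still hold in the smallest admissible cases so that the recurrence applies without exception.
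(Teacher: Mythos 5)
Your proof is correct and follows essentially the same route as the paper: apply the Gutman--Harary recurrence (Lemma~\ref{lem:recur}) at the identified root $u$, observe that $(G_v\paste H_w)-u=(G-v)\cup(H-w)$ and $(G_v\paste H_w)-N[u]=(G-N[v])\cup(H-N[w])$, and use multiplicativity of the independence polynomial over disjoint unions to get $ac-bd$ and the bracket $\brac{ac-bd,ac,bd}$. Your explicit verification that no edges cross between the two copies after deleting $u$ (or $N[u]$) is a point the paper leaves implicit, but it is the same argument.
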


Our second operation is a variation of the pasting operation which, however, is useful enough to merit its own terminology and notation.  

\begin{definition}
Given a rooted graph $G_v$ and an integer $k\geq 0$, the \emph{$\ell$-extension of $G_v$}, denoted $G_v^\ell$ is the graph formed by identifying the root $v$ with one of the endpoints of a (disjoint) path of length $\ell$ and reassigning the root to the other endpoint of the path.
\end{definition}

The length of a path is above measured in edges; for instance for a rooted graph $G_v$, the $0$-extension $G_v^0$ is simply $G_v$.  As with the pasting operation, no new cycles are created by the extension operation, and so here $\phi(G_v^{\ell}) = \phi(G)$ for any $\ell$.  In addition, the values of the independence polynomial at $-1$ of various extensions of a rooted graph $G_v$ are easy to characterize in terms of the bracket of $G_v$.  Indeed, extensions of $G_v$ have the same bracket values, up to sign, but in a different order.  The proof of the following lemma follows immediately from the recursion formula and is omitted.

\begin{lemma}[Extension Lemma]\label{lem:extend}
	If $G_v$ is a rooted graph with $I(G_v;-1)=\brac{a-b,a,b}$, then 
	\begin{align*}
	I(G_v^1;-1)&=\brac{-b,a-b,a}\\
	I(G_v^2;-1)&=\brac{-a,-b,a-b}
	\end{align*}
	and $I(G_v^3;-1) = \brac{b-a,-a-b}=-\brac{a-b,a,b} = -I(G_v;-1)$.
\end{lemma}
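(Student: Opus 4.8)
The plan is to prove the single-edge case (the $1$-extension) directly from the Gutman--Harary recursion of Lemma~\ref{lem:recur}, and then to obtain the $2$- and $3$-extension formulas for free by iterating, using the observation that appending a path of length $\ell+1$ is the same as appending a single edge to the $\ell$-extension, i.e.\ $G_v^{\ell+1}=(G_v^\ell)^1$. Thus the whole content of the lemma reduces to understanding what a single $1$-extension does to the bracket.

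First I would compute the bracket of $G_v^1$. Let $u$ be the pendant vertex attached to $v$ that serves as the new root of $G_v^1$. To read off the bracket I root at $u$ and evaluate $I(G_v^1-u;-1)$ and $I(G_v^1-N[u];-1)$. Deleting the pendant root $u$ returns the original graph $G$, so $I(G_v^1-u;-1)=I(G_v;-1)=a-b$. Since $N[u]=\set{u,v}$, deleting $N[u]$ returns $G-v$, so $I(G_v^1-N[u];-1)=I(G_v-v;-1)=a$. By the definition of the bracket this yields
\[
	I(G_v^1;-1)=\brac{(a-b)-a,\,a-b,\,a}=\brac{-b,a-b,a},
\]
which is the first claimed identity. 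The only thing to get right here is identifying $G_v^1-u$ and $G_v^1-N[u]$ with $G_v$ and $G_v-v$ respectively, and this is where I expect the (mild) obstacle to lie; everything else is bookkeeping.

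The key point to extract is that a $1$-extension acts on brackets by the rule $\brac{a-b,a,b}\mapsto\brac{-b,\,a-b,\,a}$; equivalently, writing a bracket as $\brac{a'-b',a',b'}$ determined by the pair $(a',b')$, the operation sends $(a,b)\mapsto(a-b,a)$. I would then simply apply this rule twice and three times. Since $G_v^2=(G_v^1)^1$, applying the rule to $\brac{-b,a-b,a}$ (so $(a',b')=(a-b,a)$) gives $\brac{-a,-b,a-b}$, the second identity. Applying it once more to obtain $G_v^3=(G_v^2)^1$ gives the pair $(-a,-b)=-(a,b)$, hence bracket $\brac{b-a,-a,-b}=-\brac{a-b,a,b}=-I(G_v;-1)$, the third identity.

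As a consistency check I would note that the bracket map has order six (it negates every entry after three applications), which matches the expectation that $I(G_v^3;-1)=-I(G_v;-1)$ and, reassuringly, is compatible with the entries in Table~\ref{tab:exs}: for instance successively extending a single vertex cycles the sign of the evaluation exactly as a path of growing length should. No genuinely hard step arises, since the recursion handles each single edge cleanly and the iteration is purely formal.
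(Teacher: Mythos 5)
Your proof is correct and is essentially the argument the paper intends: the paper omits the proof, noting only that it ``follows immediately from the recursion formula,'' and your computation of the bracket of $G_v^1$ via Lemma~\ref{lem:recur} at the pendant root, followed by iterating $G_v^{\ell+1}=(G_v^\ell)^1$, is exactly that straightforward application (and it also silently corrects the paper's typo, since the third bracket should read $\brac{b-a,-a,-b}$).
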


We illustrate the cycling phenomenon with $C_6$, a graph which will be used in our construction.  Obviously we may consider $C_6$ rooted at any given vertex. 

\begin{table}[h]
\begin{center}
\begin{tabular}{c|l}
	$\ell$ & $I(C_6^{\ell};-1)$\\ \hline \hline
	0 & $\brac{2,1,-1}$\\
	1 & $\brac{1,2,1}$\\
	2 & $\brac{-1,1,2}$\\
	3 & $\brac{-2,-1,1}$\\
	4 & $\brac{-1,-2,-1}$\\
	5 & $\brac{1,-1,-2}$\\
	6 & $\brac{2,1,-1}$
\end{tabular}
\end{center}
\caption{Brackets of $C_6^{\ell}$}\label{tab:exs}
\end{table}

 (Since $C_3$ has the same set of six brackets, in a different order, when extended, $C_3$ could also have been used in the constructions and proofs to come.  We choose $C_6$ solely because $C_6^0$ and $C_6^1$ have positive $I(G;-1)$.) 

Using the pasting and extension operations we have our final lemma, which shows that the word ``connected'' in the conjecture is superfluous.  Any disconected $(k,q)$-graph can be pasted together and extended to produce a connected $(k,q)$-graph.

\begin{lemma}\label{lem:nocon}
Let $G$ and $H$ be disjoint $(k_1,q_1)$ and $(k_2,q_2)$-graphs, respectively, with $k_1+k_2=k$ and $q_1q_2=q$.  Then there is a connected $(k,q)$-graph $F$, i.e., $F$ is connected, $\phi(F)=k_1+k_2=k$, and $I(F;-1) = q_1q_2 = I(G\cup H;-1)$.
\end{lemma}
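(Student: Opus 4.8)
The plan is to realize the target value $q_1q_2$ as $I(F;-1)$ for a connected graph $F$ assembled from modified copies of $G$ and $H$ pasted at a single cut vertex. The obstruction to a naive approach is that pasting is \emph{not} multiplicative on values: by Lemma~\ref{lem:paste}, pasting rooted graphs with brackets $\brac{a-b,a,b}$ and $\brac{c-d,c,d}$ produces value $ac-bd$, whereas we want $(a-b)(c-d)=q_1q_2$. These agree precisely when the subtracted cross term vanishes, so the entire construction is organized around forcing that term to be $0$. The useful observation is that brackets with third entry $0$ multiply correctly: pasting $\brac{q_1,q_1,0}$ with $\brac{q_2,q_2,0}$ gives, by Lemma~\ref{lem:paste}, the bracket $\brac{q_1q_2,q_1q_2,0}$ and hence value exactly $q_1q_2$.

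The first step is therefore to replace each of $G$ and $H$ by a connected graph with the same decycling number and a pendant root carrying such a bracket. Root $G$ so that its bracket is $\brac{q_1,a,b}$ and apply Lemma~\ref{lem:extend} once: $G_v^1$ has a pendant root and bracket $\brac{-b,q_1,a}$, whose second entry is already the value $q_1$. I then paste onto this root a path rooted at a leaf with bracket $\brac{1,1,0}$; such a path is $P_6=K_2^4$, since by Lemma~\ref{lem:extend} the graph $K_2^1$ (with $K_2$ rooted at a leaf having bracket $\brac{-1,0,1}$) has bracket $\brac{-1,-1,0}$, and extending by a further three negates the bracket. By Lemma~\ref{lem:paste} the pasted graph has bracket $\brac{q_1,q_1,0}$, its third entry killed by the $0$ supplied by the gadget. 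A final extension by six returns the root to a pendant vertex without changing the bracket, since extending by three negates it and thus extending by six restores it. Call the result $G^*$, with $\phi(G^*)=k_1$, and build $H^*$ from $H$ in the same way, with bracket $\brac{q_2,q_2,0}$ and $\phi(H^*)=k_2$.

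Finally I would set $F=G^*\paste H^*$, pasting at the two pendant roots. By Lemma~\ref{lem:paste} this has bracket $\brac{q_1q_2,q_1q_2,0}$, so $I(F;-1)=q_1q_2$; since both roots are pendant the decycling numbers add, giving $\phi(F)=k_1+k_2=k$; and pasting never disconnects, so $F$ is connected. Because $\abs{q_1q_2}=\abs{q_1}\abs{q_2}\le 2^{k_1}2^{k_2}=2^k$, the graph $F$ is a genuine $(k,q)$-graph with $q=q_1q_2=I(G\cup H;-1)$, as required.

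I expect the main obstacle to be conceptual rather than computational: recognizing that naive pasting yields $ac-bd$ rather than the product, and that the remedy is to route everything through brackets with a zero third entry, which are exactly the brackets that multiply. The remaining care is purely bookkeeping—keeping every pasted root pendant, which is why the extra extension by six is needed, so that decycling numbers add exactly instead of merely satisfying the inequality $\phi(G_v\paste H_w)\le\phi(G_v)+\phi(H_w)$.
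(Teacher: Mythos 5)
Your proof is correct, and it takes a genuinely different route from the paper's. The paper's proof builds $F'=(G_v^2\paste H_w^2)^1$, whose bracket comes out to $\brac{-q_1q_2,\,bd-q_1q_2,\,bd}$, so a single paste-and-extend pass yields $-q_1q_2$ rather than $q_1q_2$; it then runs a second pass with $K_2$ to repair the sign. You instead pre-normalize each factor: attaching the $P_6$ gadget (bracket $\brac{1,1,0}$) to $G_v^1$ converts the bracket $\brac{q_1,a,b}$ into $\brac{q_1,q_1,0}$, and brackets with third entry $0$ multiply under pasting, so one final paste gives $\brac{q_1q_2,q_1q_2,0}$ directly. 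Besides needing no sign correction, your route sidesteps an actual slip in the paper's second stage: by the paper's own analysis, the operation $X\mapsto (X_x^2\paste K_2^2)^1$ sends a value $\alpha$ to $-\alpha\cdot(-1)=\alpha$, i.e.\ it \emph{preserves} values rather than negating them, so the graph $F$ the paper constructs in fact satisfies $I(F;-1)=-q_1q_2$; the sign there should instead be repaired by, say, a $3$-extension of $F'$, which negates the bracket. Your construction never produces the wrong sign in the first place, and your care with pendant roots (the $6$-extension) is exactly what justifies additivity of the decycling numbers, matching the fact the paper records when it introduces pasting.

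One caveat, which your proof shares with the paper's: the closing claim that ``pasting never disconnects, so $F$ is connected'' is valid only when $G$ and $H$ are themselves connected, since all gadgets are attached at a single vertex and any further components of $G$ or $H$ are left untouched. This implicit assumption is harmless for the way Lemma~\ref{lem:nocon} is used (apply it repeatedly to the connected components of a disconnected $(k,q)$-graph), but a sentence acknowledging it would make the argument airtight.
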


\begin{proof}
Root the given graphs as $G_v$ and $H_w$ and let the corresponding brackets be $I(G_v;-1)=\brac{q_1,a,b}$ and $I(H_w;-1)=\brac{q_2,c,d}$, respectively.  Let $F' = (G_v^2 \paste H_w^2)^1$.  By the Extension Lemma,  $I(G_v^2;-1) = \brac{-a,-b,q_1}$ and $I(H_w^2;-1) = \brac{-c,-d,q_2}$.  Then, by the Pasting Lemma, 
\[
I(G_v^2\paste H_w^2;-1) = \brac{bd-q_1q_2,bd,q_1q_2}
\]
Therefore, again using the Extension Lemma, 
\begin{align*}
I(F';-1) &= I((G_v^2\paste H_w^2)^1;-1) \\
&= \brac{-q_1q_2,bd-q_1q_2,bd} \\
&= -q_1q_2 \\
&= -I(G\cup H;-1).
\end{align*}
In addition, neither the pasting nor extension operations produce cycles, so $\phi(F)=k_1+k_2=k=\phi(G\cup H)$.  

Now let $F = (F_x^{\prime 2} \cup K_2^2)^1$, where $F'_x$ is a rooted version of the graph $F'$ previously.  Then by the same analysis as above, we have $I(F;-1) = -I(F' \cup K_2;-1) = -(-1)I(F';-1) = I(G \cup H;-1)$, and $\phi(F) = \phi(F') = \phi(G\cup H)$, as required.  
\end{proof}

By setting $H=K_2$ and $H=C_6$ in Lemma \ref{lem:nocon} in turn, we obtain the following facts, which will also be useful in the proof.  These two facts were also noted by Levit and Mandrescu \cite{LM13}, who used different \emph{ad hoc} techniques in their constructions of the necessary graphs.

\begin{corollary}\label{cor:twice}
If $G$ is a $(k,q)$-graph then there exists (a) a connected $(k+1,2q)$-graph and (b) a connected $(k,-q)$-graph.  \end{corollary}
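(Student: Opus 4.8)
The plan is to invoke Lemma~\ref{lem:nocon} twice, each time feeding it the given $(k,q)$-graph $G$ together with a well-chosen small companion graph $H$. Recall that Lemma~\ref{lem:nocon} turns disjoint $(k_1,q_1)$- and $(k_2,q_2)$-graphs into a connected $(k_1+k_2,\,q_1q_2)$-graph; the idea is therefore to choose $H$ so that the product $q_1q_2$ and the sum $k_1+k_2$ land on the targets demanded in (a) and (b).

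First I would pin down the parameters of the two companion graphs. Since $K_2$ is a single edge it is acyclic, so $\phi(K_2)=0$, and $I(K_2;-1)=-1$ (recorded among the small examples above); hence $K_2$ is a $(0,-1)$-graph. Since $C_6$ is a cycle, deleting any one vertex leaves a path, so $\phi(C_6)=1$, and $I(C_6;-1)=2$; hence $C_6$ is a $(1,2)$-graph. In both cases the inequality $\abs{q}\le 2^k$ holds (indeed with equality), so each is a valid input to Lemma~\ref{lem:nocon}.

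For part (b), I would take $H$ to be a disjoint copy of $K_2$, so that $k_1=k$, $q_1=q$, $k_2=0$, and $q_2=-1$; Lemma~\ref{lem:nocon} then produces a connected graph with decycling number $k+0=k$ and value $q\cdot(-1)=-q$, the desired $(k,-q)$-graph. For part (a), I would instead take $H$ to be a disjoint copy of $C_6$, so that $k_2=1$ and $q_2=2$, obtaining a connected graph with decycling number $k+1$ and value $2q$, the desired $(k+1,2q)$-graph.

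Being a direct corollary of Lemma~\ref{lem:nocon}, this statement poses no genuine obstacle. The one point worth checking is that the outputs still satisfy the size constraint defining a $(k,q)$-graph: in (a), $\abs{2q}\le 2\cdot 2^k=2^{k+1}$, and in (b), $\abs{-q}=\abs{q}\le 2^k$, both immediate from $\abs{q}\le 2^k$.
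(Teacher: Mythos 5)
Your proof is correct and is exactly the paper's own argument: the authors likewise obtain (b) by applying Lemma~\ref{lem:nocon} with $H=K_2$ (a $(0,-1)$-graph) and (a) by applying it with $H=C_6$ (a $(1,2)$-graph). Your verification of the parameters $\phi(K_2)=0$, $\phi(C_6)=1$ and of the size constraints $\abs{2q}\le 2^{k+1}$, $\abs{-q}\le 2^k$ just makes explicit what the paper leaves implicit.
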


We now prove Conjecture~\ref{conj:main}.

\begin{theorem}\label{thm:main}
	Given a positive integer $k$ and an integer $q$ with $\abs{q}\leq 2^k$, there is a connected graph $G$ with $\phi(G)=k$ and $I(G;-1)=q$.
\end{theorem}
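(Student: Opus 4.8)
The plan is to strengthen the statement so that the induction can feed itself, and to induct on $k$. Alongside the assertion that every $q$ with $\abs{q}\le 2^k$ is realized at decycling number $k$, I will carry the hypothesis that for every integer $a$ with $0\le a\le 2^k$ there is a connected rooted graph $G_v$ with $\phi(G)=k$ and bracket $\brac{1,a,a-1}$ — that is, a $(k,1)$-graph in which $I(G_v-v;-1)=a$. (By the recurrence this is interchangeable with: a $(k,a)$-graph possessing a vertex $u$ with $I(G-N[u];-1)=-1$, to which one attaches a pendant root $v$, since then $I(G_v-v;-1)=a$ and $I(G_v-N[v];-1)=I(G-u;-1)=a-1$.) The base case $k=1$ is handled by inspection: $C_3$, $C_6$, and the extensions recorded in Table~\ref{tab:exs} realize $\abs{q}\in\set{1,2}$, a triangle with two pendant vertices realizes $q=0$, and these same small graphs supply the brackets $\brac{1,a,a-1}$ for $a\in\set{0,1,2}$.

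For the inductive step, the negation half of Corollary~\ref{cor:twice} lets me assume $q\ge 0$, and I split by size. If $0\le q\le 2^{k-1}$, the induction provides a $(k-1,q)$-graph, which I combine (via Lemma~\ref{lem:nocon}) with the $(1,1)$-graph $C_5$ to obtain a connected $(k,q)$-graph. If $q$ is even with $2^{k-1}<q\le 2^k$, then $\abs{q/2}\le 2^{k-1}$, so the induction gives a $(k-1,q/2)$-graph and the doubling half of Corollary~\ref{cor:twice} yields a $(k,q)$-graph; this also settles $q=2^k$. The only remaining case is $q$ odd with $2^{k-1}<q<2^k$. Writing $q=2a-1$ with $a=(q+1)/2$, so that $2^{k-2}<a\le 2^{k-1}$, I invoke the strengthened hypothesis at level $k-1$ to obtain a rooted $(k-1,1)$-graph $G_v$ with bracket $\brac{1,a,a-1}$, and paste it with $C_6$ rooted so that its bracket is $\brac{2,1,-1}$. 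By Lemma~\ref{lem:paste}, $I(G_v\paste C_6;-1)=a\cdot 1-(a-1)(-1)=2a-1=q$, and $\phi=(k-1)+1=k$ since the roots are pendant; negation then covers the matching negative value.

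The real work, and the step I expect to be the main obstacle, is propagating the strengthened hypothesis itself: producing the brackets $\brac{1,a,a-1}$ for every $a\le 2^k$ at level $k$ from the level-$(k-1)$ data. The lower half is cheap: pasting a level-$(k-1)$ graph of bracket $\brac{1,a,a-1}$ with a decycling-number-$1$ graph of bracket $\brac{0,1,1}$ leaves the bracket unchanged while raising $\phi$ by one, so all $a\le 2^{k-1}$ carry over. The difficulty is the upper half $2^{k-1}<a\le 2^k$. The Pasting Lemma multiplies the outer bracket entries while the Extension Lemma cyclically permutes them up to sign (Lemma~\ref{lem:extend}), so the reachable entries are far from arbitrary — one cannot, for example, manufacture a large prime first entry as a single coordinatewise product — and the genuinely new brackets must be built by pasting a convenient product bracket and then rotating it into position with an extension. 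Orchestrating this for every $a$ up to the extreme $a=2^k$ (which borders on the $q\in\set{2^{\phi(G)},2^{\phi(G)}-1}$ constructions already given by Levit and Mandrescu) is the crux. In the alternative formulation, the task is to guarantee that each value at level $k$ is realized by a graph retaining a vertex $u$ with $I(G-N[u];-1)=-1$, and to verify that this feature survives the pasting, extension, and doubling operations used throughout; confirming its persistence, especially near the top of the range, is where I expect the construction to demand the most care.
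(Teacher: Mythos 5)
Your reduction of the theorem to the strengthened hypothesis is sound: the case split, the use of Corollary~\ref{cor:twice} and Lemma~\ref{lem:nocon} (with $C_5$ as a $(1,1)$-graph), and the computation $I(G_v\paste C_6;-1)=a\cdot 1-(a-1)(-1)=2a-1=q$ are all correct, provided you also carry ``the root is a pendant vertex'' inside the hypothesis — you need this for $\phi(G_v\paste C_6)=(k-1)+1$, since the root of $C_6$ lies on its cycle and pasting two cycle vertices can let one deleted vertex kill cycles on both sides. But the proposal has a genuine gap, which you name yourself and then leave open: you never prove that the strengthened hypothesis propagates from level $k-1$ to level $k$ on the upper half $2^{k-1}<a\le 2^k$. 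That step is not a technicality to be deferred; it is the entire content of the induction, the exact analogue of the Claim inside the paper's proof, where the invariant carried is instead ``bracket $\brac{q,2^k,2^k-q}$ or $\brac{q,-2^k+q,-2^k}$ for odd $q$,'' chosen precisely because it is closed under pasting with $C_6^1$ or $C_6^2$ followed by extensions. Concretely, your argument as written already fails at $k=3$, $q=5$: it requires a level-$2$ graph with bracket $\brac{1,3,2}$, and nothing in your proposal constructs one (your lower-half carry-over only gives $a\le 2$ at level $2$).

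The missing idea is that you should not try to paste directly into the form $\brac{1,a,a-1}$ (you are right that a first entry equal to $1$ with a large second entry is unreachable as a single coordinatewise product); instead, paste into a \emph{rotation} of the target bracket and then rotate it into position with extensions. With that trick your invariant does propagate. For even $a=2a'$ with $2^{k-1}<a\le 2^k$: take the level-$(k-1)$ graph with bracket $\brac{1,a',a'-1}$, extend once to $\brac{1-a',1,a'}$, paste with $C_6^2=\brac{-1,1,2}$ to get $\brac{1-a,1,a}$ by Lemma~\ref{lem:paste}, then apply five further extensions (Lemma~\ref{lem:extend} has period six) to reach $\brac{1,a,a-1}$. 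For odd $a=2a'-1$: extend twice to $\brac{-a',1-a',1}$, paste with $C_6^1=\brac{1,2,1}$ to get $\brac{-a,1-a,1}$, and extend four more times. In both cases $a'\le 2^{k-1}$, all pastes occur at pendant roots so $\phi$ adds to $k$, and the final root is pendant, restoring the (strengthened) hypothesis. Two smaller repairs are also needed: your lower-half gadget with bracket $\brac{0,1,1}$ must actually have $\phi=1$ ($C_5$ with a pendant path of length two works, since $C_5$ rooted on its cycle has bracket $\brac{1,0,-1}$), and the base case is understated — none of $C_3$, $C_6$, their extensions, or the triangle with two pendants has bracket $\brac{1,1,0}$ or $\brac{1,0,-1}$; take $C_5$ plus a pendant root for the former, and the triangle with two pendants plus a new root pendant to one of the pendants for the latter. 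With these insertions your argument becomes a complete proof, structurally parallel to the paper's but carrying a different invariant.
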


\begin{proof}
By Lemma \ref{lem:nocon} we do not need to produce connected $(k,q)$-graphs for all $\abs{q}\le 2^k$; disconnected $(k,q)$-graphs will suffice.  Since $I(G\cup K_1;-1)=0$ for all $G$, we can consider the case $q=0$ done for all $k$.  

As mentioned previously, our proof proceeds inductively on $k$.  When $k=1$ then $I(C_6;-1)=\brac{2,1,-1}$ and, as noted in Table~\ref{tab:exs}, by taking extensions of $C_6$, we rotate through all of $\{2,1,-1,-2\}$.  Thus the theorem is true for $k=1$.

For the induction step, assume $(k-1,q)$-graphs are constructible for all $q \le 2^{k-1}$.  By Corollary~\ref{cor:twice}(a) we immediately have that $(k,q)$-graphs for even $q \le 2^k$ are constructible.  By Corollary~\ref{cor:twice}(b) we also need only construct $(k,q)$-graphs for positive $q \le 2^k$.  It only remains, then, to construct $(k,q)$-graphs for $q$ each odd integer in $[0,2^k]$.   To that end, we prove the following claim.

\begin{claim}
	For each odd integer $q\in [0,2^k]$, there is a connected $(k,q)$-graph $G_v$ such that either $I(G_v;-1)=\brac{q,2^k,2^k-q}$ or $I(G_v;-1)=\brac{q,-2^k+q,-2^k}$.
\end{claim}

\begin{proof}
   For $k=1$, we see that the bracket of $C_6^1$ has the necessary form, i.e. $I(C_6^1;-1)=\brac{1,2,1}$.  Assume that the hypothesis of the claim is true for $k-1$; we seek to produce $(k,q)$-graphs for each odd $q\in [0,2^k]$ such that $2^k$ or $-2^k$ appears in their bracket.  We consider two cases:  $q\in [2^{k-1},2^k]$ and $q\in [0,2^{k-1}]$.

	For the first case, let $q$ be an odd integer in $[2^{k-1},2^k]$.  Necessarily then, $q=2^k-r$ for some $r\in [0,2^{k-1}]$.  By the induction assumption, there is some $(k-1,r)$-graph $G_v$ such that either $I(G_v;-1)=\brac{2^{k-1}-r,2^{k-1},r}$ or $I(G_v;-1)=\brac{2^{k-1}-r,-r,-2^{k-1}}$.  By the Pasting Lemma, then, $I(G_v\paste C_6^1;-1)=\brac{2^k-r,2^k,r}=q$ if the bracket of $G_v$ is of the first form, or $I(G_v\paste C_6^2;-1)=\brac{2^k-r,-r,-2^{k}}$ if the bracket of $G_v$ is of the second form.  Thus the claim is true for all $q\in [2^{k-1},2^k]$.

   We are left with the second case of the odd $q\in [0,2^{k-1}]$.  However, because $2^k$ appears in all the brackets in the previous case, necessarily these odd $q\in [0,2^{k-1}]$ correspond to the $r$ that appeared in those brackets.  Hence extending the constructions for the odd $q\in[2^{k-1},2^k]$ appropriately will produce these $r$. 
\end{proof}

The proof of the claim completes the induction, and completes the proof. 
\end{proof}


\section{Acknowledgment} 

The authors would like to thank Hannah Quense and Tara Wager for helpful discussions.

\bibliographystyle{amsplain}
\bibliography{indpoly2}

\providecommand{\bysame}{\leavevmode\hbox to3em{\hrulefill}\thinspace}
\providecommand{\MR}{\relax\ifhmode\unskip\space\fi MR }
\providecommand{\MRhref}[2]{%
  \href{http://www.ams.org/mathscinet-getitem?mr=#1}{#2}
}
\providecommand{\href}[2]{#2}
\begin{thebibliography}{1}

\bibitem{A13}
Micha{\l} Adamaszek, \emph{Special cycles in independence complexes and
  superfrustration in some lattices}, Topology Appl. \textbf{160} (2013),
  no.~7, 943--950. \MR{3037886}

\bibitem{BLN08}
Mireille Bousquet-M{\'e}lou, Svante Linusson, and Eran Nevo, \emph{On the
  independence complex of square grids}, J. Algebraic Combin. \textbf{27}
  (2008), no.~4, 423--450. \MR{2393250 (2009k:05137)}

\bibitem{E09}
Alexander Engstr{\"o}m, \emph{Upper bounds on the {W}itten index for
  supersymmetric lattice models by discrete {M}orse theory}, European J.
  Combin. \textbf{30} (2009), no.~2, 429--438.

\bibitem{GH}
Ivan Gutman and Frank Harary, \emph{Generalizations of the matching
  polynomial}, Utilitas Math. \textbf{24} (1983), 97--106.

\bibitem{HS10}
Liza Huijse and Kareljan Schoutens, \emph{Supersymmetry, lattice fermions,
  independence complexes and cohomology theory}, Adv. Theor. Math. Phys.
  \textbf{14} (2010), no.~2, 643--694. \MR{2721658 (2011i:81152)}

\bibitem{J09}
Jakob Jonsson, \emph{Hard squares with negative activity on cylinders with odd
  circumference}, Electron. J. Combin. \textbf{16} (2009), no.~2, Special
  volume in honor of Anders Bjorner, Research Paper 5, 22. \MR{2515768
  (2010g:05267)}

\bibitem{LM05}
Vadim~E. Levit and Eugen Mandrescu, \emph{The independence polynomial of a
  graph---a survey}, Proceedings of the 1st {I}nternational {C}onference on
  {A}lgebraic {I}nformatics, Aristotle Univ. Thessaloniki, Thessaloniki, 2005,
  pp.~233--254. \MR{2186466 (2006k:05163)}

\bibitem{LM11}
\bysame, \emph{A simple proof of an inequality connecting the alternating
  number of independent sets and the decycling number}, Discrete Math.
  \textbf{311} (2011), no.~13, 1204--1206.

\bibitem{LM13}
\bysame, \emph{The cyclomatic number of a graph and its independence polynomial
  at {$-1$}}, Graphs Combin. \textbf{29} (2013), no.~2, 259--273.

\end{thebibliography}

\end{document}